\newenvironment{keywords}{ \noindent {\small\bf Key Words}:}{ }
\newenvironment{proof}{ \noindent {\bf Proof}.}{ }
\newtheorem{lemma}{Lemma}
\newtheorem{theorem}{Theorem}
\def\beq{\begin{equation}}
\def\eeq{\end{equation}}
\def\bd{\begin{description}}
\def\ed{\end{description}}
\def\bea{\begin{eqnarray}}
\def\eea{\end{eqnarray}}
\def\beas{\begin{eqnarray*}}
\def\eeas{\end{eqnarray*}}
\newcommand{\ds}{\displaystyle}
\newcommand{\nms}{\normalsize}
\begin{document}

\author{ Yaroslav D. Sergeyev}

\title{Univariate global optimization with multiextremal
non-differentiable constraints  without  penalty
functions\thanks{This research was  supported by the
       following grants: FIRB RBNE01WBBB, FIRB RBAU01JYPN, and RFBR
       04-01-00455-a. The author thanks Prof. D.~Grimaldi for proposing the application discussed
       in the paper.}}

\author{{\small \bf Yaroslav D. Sergeyev}\\ \\ [-2pt]
         \nms D.E.I.S. -- Universit\`a della Calabria, 87036 Rende (CS)
         -- Italy\\[-4pt]
         \nms and University of Nizhni Novgorod,\\[-4pt]
         \nms Gagarin Av., 23, Nizhni Novgorod -- Russia\\[-4pt]
         \nms {\tt (yaro@si.deis.unical.it)}
}

\date{}

\maketitle

%

%

\begin{abstract}
This paper proposes a new algorithm for solving  constrained
global optimization problems where both the objective function and
constraints  are one-dimensional non-differentiable multiextremal
Lipschitz functions. Multiextremal constraints can lead to complex
feasible regions being collections of isolated points and
intervals having  positive lengths. The case is considered where
the order the constraints are evaluated is fixed by the nature of
the problem and a constraint $i$ is defined only over the set
where the constraint $i-1$ is satisfied. The objective function is
defined only over the set where all the constraints are
satisfied. In contrast to traditional approaches, the new
algorithm does not use any additional parameter or variable. All
the constraints are not evaluated during every iteration of the
algorithm providing a significant acceleration of the search. The
new algorithm either finds lower and upper bounds for the global
optimum or establishes that the problem is infeasible.
Convergence properties and numerical experiments showing a nice
performance of the new method in comparison with the penalty
approach are given.
\end{abstract}

\hspace{2cm}

\begin{keywords}
Global optimization, multiextremal constraints, Lipschitz
functions,\\ continuous in\-dex functions.
\end{keywords}

\newpage

\section{Introduction}
\label{intro}
 In last decades univariate global optimization
problems were studied intensively (see
\cite{Calvin_and_Zilinskas_(1999),Hansen_(3),Lamar_(1999),Locatelli_and_Schoen_(1995),
MacLagan_and_Sturge_and_Baritompa_(1996),Pijavskii_(1972),Sergeyev_(1998),Strongin_(1978),Wang_and_Chang_(1996)})
because there exists a large number of real-life applications
where it is necessary to solve such problems (see
\cite{Brooks_(1958),Hansen_(3),Patwardhan_(1987),Ralston_(1985),Sergeyev_et(1999),Strongin_(1978),Strongin_and_Sergeyev}).
On the other hand, it is important to study these problems because
mathematical approaches developed to solve them can be generalized
to the multidimensional case by numerous  schemes (see, for
example, one-point based, diagonal, simplicial, space-filling
curves, and other popular approaches in
\cite{Floudas_and_Pardalos_(1996),Horst_and_Pardalos_(1995),Horst_and_Tuy_(1996),Mladineo_(1992),Pardalos_and_Rosen_(1990),Pinter_(1996),Strongin_(1978)}).

Electrotechnics and electronics are among the fields where
one-dimensional global optimization methods can be used
successfully (see
\cite{Casado_GS(2000),Casado_GS(2002),DaponteGMS95,DaponteGMS96,Molinaro_(2001),Sergeyev_et(1999),Strongin_and_Sergeyev}).
Let us consider, for example, the following so-called `mask
problem' for transmitters. We have a transmitter (for instance,
that of GSM cellular phones) that in a frequency interval $[a,b]$
has an amplitude $A(x)$ that should be inside the mask defined by
functions  $l(x)$ and $u(x)$, i.e., it should be  $l(x) \le A(x)
\le u(x)$. The mask is defined by international rules agreed to
avoid interference appearing when amplitude is too high for a
given frequency  and by properties of electronic components used
to construct the transmitter. Then, it is necessary to find a
frequency $x^* \in [a,b]$ such that the power, $p(x)$, of the
transmitted signal is maximal.

It happens often in engineering optimization problems (see
\cite{Strongin_(1978),Strongin_and_Sergeyev}) that if a
constrained is not satisfied at a point then many other
constraints and the objective function are not defined at this
point. This situation holds in our mask problem because if for a
frequency $\xi$ if happens that $A(\xi) > u(\xi)$ or $A(\xi) <
l(\xi)$ then there is no transmission and the function $p(\xi)$ is
not defined. Since the amplitude can touch the mask both from its
internal and its external parts, isolated points in the admissible
region of $p(x)$ can take place. If the maximal power $p(x^*)$ is
attained at an isolated point $x^*$, then this point should be
discarded from consideration because it cannot be realized in
practice. Thus, the solution is acceptable only if it belongs to a
finite interval of a certain length.

This problem can be reformulated in the following   general
framework of   global optimization problems   considered in this
paper. It is necessary to find  the global minimizers and the
global minimum of a function $f(x)$ subject to constraints
$g_{j}(x) \le 0, 1\le i \le m,$ over an interval $[a,b]$. The
objective function $f(x)$ and constraints $g_{j}(x), 1\le i< m,$
are multiextremal non-differentiable `black-box' Lipschitz
functions with a priori known Lipschitz constants (to unify the
description process  the designation $g_{m+1}(x) \triangleq f(x)$
is used hereinafter). Very often in real-life applications the
order  the constraints are evaluated is fixed by the nature of the
problem and not all the constraints are defined over the whole
search region $[a,b]$. The worst case  is considered here, i.e., a
constraint $g_{j+1}(x)$ is defined only over subregions where
$g_{j}(x)\le 0$. This means that if a constraint is not satisfied
at a point, the rest of constraints and the objective function are
not defined at that point.  The sets $Q_{j}, 1\le j\le m+1,$ can
be so defined as follows
\begin{equation}
Q_{1}=[a,b], \hspace{3mm} Q_{j+1}=\{x\in Q_{j}: g_{j}(x)\le 0 \},
\hspace{5mm} 1\le j\le m,  \label{Q}
\end{equation}
\[
Q_{1}\supseteq  Q_{2}\supseteq\ldots\supseteq Q_{m}\supseteq
Q_{m+1}.
\]
Since the constraints are multiextremal, the admissible region
$Q_{m+1}$ and regions $Q_{j},$ $1\le j\le m,$ can be   collections
of intervals having positive lengths and isolated points.
Particularly, isolated points appear when one of the constraints
touches zero, for example, if $g_{j}(x)$ is the square of some
function, then $g_{j}(x) \le 0$ only when $g_{j}(x)=0$. To be
implementable in practice, optimal solutions should have a
feasible neighborhood of positive length thus, an additional
constraint is included in the model: a point $x^*$ should belong
to an admissible interval having length equal to or greater than
$\delta > 0$ -- a value supplied by the final user. The set of all
such intervals is designated as $Q^{\delta}$ (of course,
$Q^{\delta} \subseteq Q_{m+1}$). Eventually found isolated points
and feasible subregions having length less than $\delta$ should be
excluded from consideration. If the case of infeasible problem
$Q^{\delta}=\emptyset$ holds, it should be also determined.

We can now state the problem formally. Find  the global minimizers
$x^*$ and the corresponding value $f^*$ such that
\begin{equation}
f^*=f(x^*)=\min \{ f(x): x\in Q^{\delta} \},    \label{problem}
\end{equation}
where the objective function $f(x)$ and constraints $g_{j}(x),
1\le i< m,$ are multiextremal  functions  satisfying the Lipschitz
condition in the form
\begin{equation}
\mid g_{j}(x') - g_{j}(x'')\mid  \le  L_{j}\mid x'- x''\mid ,
\hspace{3mm} x',x''\in Q_{j},   \hspace{3mm} 1\le j\le m+1,
\label{Lip}
\end{equation}
and the constants
\begin{equation}
0 <  L_{j} < \infty, \hspace{5mm} 1\le j\le m+1 \, , \label{const}
\end{equation}
are known (this  supposition is classical in global optimization
(see \cite{Hansen_(3),Horst_and_Tuy_(1996),Pijavskii_(1972)})).
Methods working on the basis of this assumption are called `exact'
in literature, methods estimating these values are `practical'. On
the one hand, the exact methods serve as a basis for studying
theoretical properties of practical ones and are used as a unit of
measure of the speed of practical methods. On the other hand, in
certain cases, when additional information about the objective
function and constraints is available, they can be applied
directly.

An example of such a problem is shown in Fig.~\ref{f1}.  It has
two non-differentiable multiextremal constraints $g_{1}(x)$ and
$g_{2}(x)$. The corresponding sets $Q_{1}=[a,b], Q_{2},$  and
$Q_{3}$ are shown. The point $c$ belongs to the sets $Q_{1},
Q_{2},$  and $Q_{3}$ but $c \notin Q^{\delta}$.  The set
$Q^{\delta}$ is shown by the grey color. It can be seen from
Fig.~\ref{f1} that the sets $ Q_{2}, Q_{3},$ and $ Q^{\delta}$
consist of disjoint subregions and $ Q_{2}, Q_{3}$ contain also
an isolated point.
\begin{figure}[t]
  \begin{center}
    \caption{An example of the
problem (\ref{problem})--(\ref{const}).  \label{f1}}
    \epsfig{ figure = 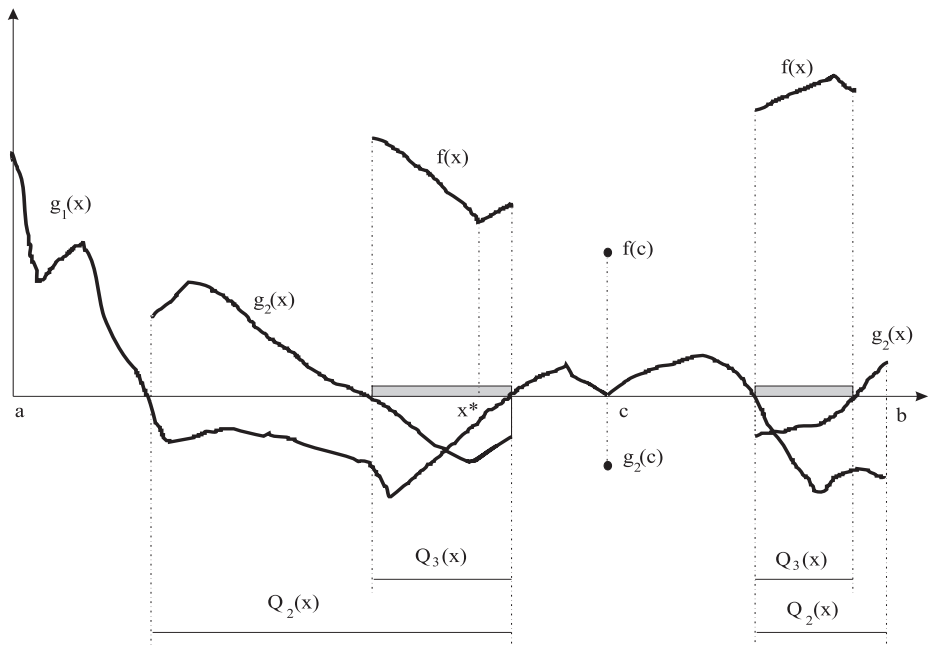, width = 4.8in, height = 3.375in,  silent = yes }
  \end{center}
\end{figure}

It is not easy to find a traditional algorithm for solving the
problem (\ref{problem})--(\ref{const}). For example, the penalty
approach  requires that $f(x)$ and $g_i(x),\, 1\le i\le m,$ are
defined over the whole search interval $[a,b]$. It seems that
missing values can be simply filled in with either a big number
or the function value at the nearest feasible point.
Unfortunately, in the context of Lipschitz algorithms,
incorporating such ideas can lead to infinitely high Lipschitz
constants, causing degeneration of the methods and
non-applicability of the penalty approach.

A promising approach called the {\it index scheme} has been
proposed in \cite{Strongin_(1984)} (see also
\cite{Strongin_and_Markin_(1986),Strongin_and_Sergeyev}) in
combination with stochastic Bayesian algorithms. An important
advantage of the index scheme is that it does not introduce
additional variables and/or parameters by opposition to classical
approaches in
\cite{Bertsekas_(1996),Bertsekas_(1999),Horst_and_Pardalos_(1995),Horst_and_Tuy_(1996),Nocedal_and_Wright_(1999)}.
It has been recently shown in \cite{Sergeyev_Famularo_Pugliese}
that the index scheme can be also successfully used in combination
with the Branch-and-Bound approach. Unfortunately, this scheme can
not be  applied directly for solving the problem
(\ref{problem})--(\ref{const}) because it has good convergence
properties when all the sets $Q_{j}, 1\le j\le m+1,$ have no
isolated points -- requirement hardly verified in practice without
some additional information about the problem.

Thus, isolated points give serious problems when one has only
Lipschitz information. First, because it is not possible to say a
priori whether the feasible region has isolated points or not (for
example, the method from \cite{Sergeyev_Famularo_Pugliese}
converges only to global minimizers if it is ensured absence of
isolated points). Second, in Lipschitz global optimization
isolated points can lead to two problems: accumulation of trial
points in their neighborhood (this happens even if there exists a
sub-region where a constraint does not touch zero but is only
close to zero) and increasing the estimates of Lipschitz constants
to infinity. This fact means that the search region will be
covered by a uniform mesh of trials) if the Lipschitz constant is
estimated or the method simply will not work  if the Lipschitz
constant is given -- our case -- because local adaptively obtained
information will contradict the given one. Therefore, traditional
Lipschitz methods cannot be used in the presence of isolated
points and, since their absence can be hardly determined in
practice, developments of methods that are able to work
independently of the presence or absence of them becomes very
important.

In this paper, such a method is proposed.  It evolves  the idea of
separate consideration of each constraint introduced in
\cite{Strongin_(1984)} in a new way   and reduces the original
constrained problem to a new continuous problem. The method from
\cite{Sergeyev_Famularo_Pugliese}   is used as a basis for
construction of the new scheme. Instead of discontinuous support
functions   proposed in \cite{Sergeyev_Famularo_Pugliese}, new
continuous functions are built. These new structures are very
important because by using them it becomes possible to apply
numerous tools developed in Lipschitz unconstrained optimization
to a very general class of  constrained problems. It is also
necessary to emphasize that the new approach does not introduce
additional variables and/or parameters during this passage from
initial discontinuous constrained partially defined problem to the
continuous unconstrained one.

To conclude this introduction it is necessary to emphasize once
again that the problem of multi-dimensional extensions of
one-dimensional Lipschitz global optimization methods to many
dimensions is a non-trivial serious problem (P.~Hansen and
B.~Jaumard write in their survey on Lipschitz optimization
\cite{Hansen_(3)} published in the Handbook of Global
Optimization: `Large problems (with 10 variables or more) appear
to be often intractable, at least if high precision is required')
and is beyond the scope of this paper dedicated to the univariate
algorithms and univariate applications. However, the approach
proposed here is very promising from this point of view. In the
future,  a number of various multi-dimensional extensions
(starting from the adaptive diagonal and space-filling curves
approaches (see \cite{Sergeyev_(2000),Strongin_and_Sergeyev}))  of
the algorithm presented in this paper will be studied.

The rest of the paper is organized as follows. The new method is
described in Section~2. Section~3 contains computational results
and a brief conclusion.

\section{Continuous index  functions and the new algorithm}

 The  index scheme  (see
\cite{Strongin_(1984),Strongin_and_Markin_(1986),Strongin_and_Sergeyev})
considers constraints one at a time at every point where it has
been decided to calculate $f(x)$ determining the  index $\nu =
\nu(x),   1 \le \nu \le m+1,$   by the following conditions
\begin{equation}
g_{j}(x)\le 0,  \hspace{3mm}1\le j\le \nu -1, \hspace{5mm} g_{\nu
}(x)>0,       \label{index}
\end{equation}
where for $\nu =m+1$ the last inequality is omitted. The term
{\it trial} used hereinafter means determining the index $\nu
(x)$  at a point $x$ by evaluation $g_{i}(x), 1\le i\le \nu
(x)$.  The index $\nu (x)$ and the value $g_{\nu (x)}(x)$ are
called {\it results of the trial}.

The discontinuous index function $J(x), x \in [a,b],$ can be
written for the problem (\ref{problem})--(\ref{const}) following
\cite{Strongin_(1984)}
\begin{equation}
J(x) = g_{\nu(x) }(x) - \left\{ \begin{array}{ll}
                             0 \, ,          & \nu (x)<m+1, \\
                             f^{*} \, ,& \nu (x)=m+1,
                                    \end{array}
                            \right.          \label{indexFun1}
\end{equation}
where the value $f^{*}$ is the unknown solution to this problem.

Let us start our theoretical consideration by noticing that the
global minimizer of the original constrained problem
(\ref{problem})--(\ref{const})  in the case $Q^{\delta}\neq
\emptyset$ coincides with the so\-lu\-tion to the following
unconstrained discontinuous problem
\begin{equation}
 J(x^{*})=\min \{ J(x): x\in \overline{Q}^{\delta}\},  \label{7}
 \end{equation}
where
\begin{equation}
\overline{Q}^{\delta}= [a,b]\setminus \{ Q_{m+1} \setminus
Q^{\delta}\}.                              \label{compQ}
\end{equation}

Suppose now that trials have been executed in a way at some points
\begin{equation}
a=x_{0}<x_{1}<\ldots  <x_{i}<\ldots  <x_{k}=b \label{indexFun4}
\end{equation}
and $\nu_{i}=\nu(x_{i}), 0\le i\le k,$ are their {\it starting
indexes}.  Note that the notion of {\it index} is different with
respect to
\cite{Strongin_(1984),Strongin_and_Markin_(1986),Strongin_and_Sergeyev}
where the index is calculated once and then used in the course of
optimization. In this paper, formula (\ref{index}) defines the
starting value for the index that can then be changed during the
work of the algorithm.

The  points from (\ref{indexFun4}) form the list (called
hereinafter {\it History List $H(k)$}) of intervals
$[l_{i},r_{i}], 1\le i\le k,$ where
\[
l_{i} < r_{i} ,  \hspace{5mm} 1\le i\le k,
 \]
 \[
r_{i} = l_{i+1}, \hspace{5mm} 1\le i < k.
 \]
The record $x \in H(k)$ means that the point $x=l_i$ or $x=r_i$
for an interval $i$ from $H(k)$. Every element $i, 1\le i\le k,$
of the list contains the following information:
\begin{equation}
[l_{i},r_{i}],\hspace{2mm} \nu(l_{i}),\hspace{2mm}
\nu(r_{i}),\hspace{2mm} g_{\nu (l_{i})}(l_{i}),\hspace{2mm} g_{\nu
(r_{i})}(r_{i}).           \label{infoHk}
 \end{equation}
The second list, $W(k)$,  called {\it Working List } is built
during the work of the method to be introduced by excluding from
$H(k)$ intervals where global minimizers of the problem
(\ref{problem})--(\ref{const}) can not be located (initially it is
stated $W(k)=H(k)$). In contrast to $H(k)$ where the information
(\ref{infoHk}) is calculated once and then is kept during the
search,   indexes $\nu(l_{i})$ and $ \nu(r_{i})$ in $W(k)$ can be
changed in the course of optimization.

In order to pass from the problem (\ref{problem})--(\ref{const})
to  the problem (\ref{7}) it is necessary to estimate the value
$f^{*}$ from (\ref{problem}) and the set $Q^{\delta}$. Using the
results of trials at the points from the row (\ref{indexFun4})
the value
 \beq
  Z^{*}_{k}=\min \{ g_{m+1}(x): \nu (x)=m+1, x \in W(k)  \}. \label{Zk}
 \eeq
estimating $f^{*}$ can be calculated if there exist points $x$
with the index $\nu (x)=m+1$. This value allows us to define the
function $ J^k(x), x \in [a,b],$ by replacing the unknown value
$f^{*}$ in (\ref{indexFun1}) by $Z^{*}_{k}$:
\begin{equation}
J^k(x) = g_{\nu(x) }(x) - \left\{ \begin{array}{ll}
                             0 \, ,          & \nu (x)<m+1, \\
                             Z^{*}_{k} \, ,& \nu (x)=m+1.
                                    \end{array}
                            \right.          \label{Jk}
\end{equation}
The following Lemma establishes some useful properties of the
functions $J(x)$ and $J^k (x)$.

\begin{lemma}
 \label{l1}
The following assertions hold for the functions $J(x)$ and $J^k
(x)$:
\begin{enumerate}
\item[i.]
 for all points $x$ having indexes $\nu(x)<m+1$,
it follows  $J^k (x)=J(x)>0$;
\item[ii.]
\beq
  J^k (x) \le 0, \hspace{5mm} x \in \{ x: g_{m+1}(x) \le
  Z^{*}_{k} \}.       \label{l1.1}
 \eeq
\item[iii.]
if   $\nu(x)=m+1$ and $Z^{*}_{k} \ge f^*$ then
 \beq
J^k (x) \le  J(x).  \label{l1.2}
 \eeq
\end{enumerate}
\end{lemma}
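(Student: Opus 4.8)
The plan is to unwind the definitions of $J(x)$ in~(\ref{indexFun1}) and $J^k(x)$ in~(\ref{Jk}), and to note that the three assertions are essentially case distinctions on the index $\nu(x)$ together with one elementary comparison. For part~(i), if $\nu(x)<m+1$, then both~(\ref{indexFun1}) and~(\ref{Jk}) subtract $0$, so $J^k(x)=g_{\nu(x)}(x)=J(x)$; and by the definition of the index in~(\ref{index}) we have $g_{\nu(x)}(x)>0$, which gives $J^k(x)=J(x)>0$. This is immediate.

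For part~(ii), I would argue as follows. If $x$ satisfies $g_{m+1}(x)\le Z^*_k$, then in particular $g_{m+1}(x)$ is defined, which forces $x\in Q_{m+1}$, i.e.\ all constraints $g_j(x)\le 0$ for $1\le j\le m$ are satisfied; hence $\nu(x)=m+1$ by~(\ref{index}). Then by the second branch of~(\ref{Jk}), $J^k(x)=g_{m+1}(x)-Z^*_k\le 0$ by the assumed inequality. The only subtlety worth a sentence is why $g_{m+1}(x)$ being defined implies $x\in Q_{m+1}$: this is exactly the partial-definition convention~(\ref{Q}) set up in the problem statement, that $g_{j+1}$ is defined only on $\{x\in Q_j:g_j(x)\le 0\}$.

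For part~(iii), assume $\nu(x)=m+1$ and $Z^*_k\ge f^*$. Using the second branch of both~(\ref{indexFun1}) and~(\ref{Jk}), we have $J(x)=g_{m+1}(x)-f^*$ and $J^k(x)=g_{m+1}(x)-Z^*_k$. Subtracting, $J^k(x)-J(x)=f^*-Z^*_k\le 0$ by hypothesis, which is~(\ref{l1.2}).

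I do not expect any real obstacle here; the lemma is a bookkeeping statement about the two index functions. The one place to be careful is to justify cleanly, in part~(ii), that no point outside $Q_{m+1}$ can satisfy $g_{m+1}(x)\le Z^*_k$ — i.e.\ to invoke the convention that the objective is evaluated only on the fully feasible set — rather than treating $g_{m+1}$ as if it were defined everywhere. Everything else is direct substitution into~(\ref{indexFun1}) and~(\ref{Jk}).
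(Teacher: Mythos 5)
Your proposal is correct and follows essentially the same route as the paper, whose proof simply states that all three assertions follow from the definitions of $J(x)$ and $J^k(x)$; you have merely spelled out the case analysis on $\nu(x)$ and the elementary comparison $Z^*_k\ge f^*$ that the paper leaves implicit. The observation in part~(ii) that $g_{m+1}(x)\le Z^*_k$ forces $x\in Q_{m+1}$ and hence $\nu(x)=m+1$ is the right justification and is consistent with the paper's conventions.
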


\begin{proof}
Truth of assertions i -- iii follows from definitions of   the
functions $J(x)$ and $J^k (x)$.   \rule{5pt}{5pt}
 \end{proof}

Particularly, it follows from Lemma~\ref{l1} that (\ref{l1.2})
holds if the trial point corresponding to $Z^{*}_{k}$ belongs to
$Q^{\delta}$. The estimate (\ref{l1.2}) is not   true if $
Z^{*}_{k} \le f^*$, the situation which can occur only if
$x^{*}_{k}$ belongs to the set $Q_{m+1} \setminus Q^{\delta}$
where
 \beq
 x^{*}_{k}=\arg \min \{ g_{m+1}(x): \nu (x)=m+1, x \in W(k)  \}. \label{xk}
 \eeq

Let us introduce the following  {\it continuous index function}
$C(x), x \in   \overline{Q}^{\delta},$ and study its properties.
\begin{equation}
C (x) = \max_{y \in \overline{Q}^{\delta}} \{ J(x),\hspace{1mm}
J(y)- K_{\nu(y)}\mid x- y \mid) \},          \label{6}
\end{equation}
where $K_{\nu(y)}$ such that $L_{\nu(y)} < K_{\nu(y)} < \infty$ is
an overestimate of the Lipschitz constant corresponding to the
function $g_{\nu(y) }(y)$ and $J(x)$ is the discontinuous index
function from (\ref{indexFun1}).

As an illustration, the function $C(x)$ corresponding to the
problem presented in Fig.~\ref{f1} is shown in Fig.~\ref{f2}. The
parts of the function  $C(x)$ corresponding to  $x$ and $y$ such
that
\[
J(x)< J(y)- K_{\nu(y)}\mid x- y \mid
\]
are shown by the thin line.
\begin{figure}[t]
  \begin{center}
    \caption{The function $C(x)$ corresponding to the
problem presented in Fig.~\ref{f1}.  \label{f2}}
    \epsfig{ figure = 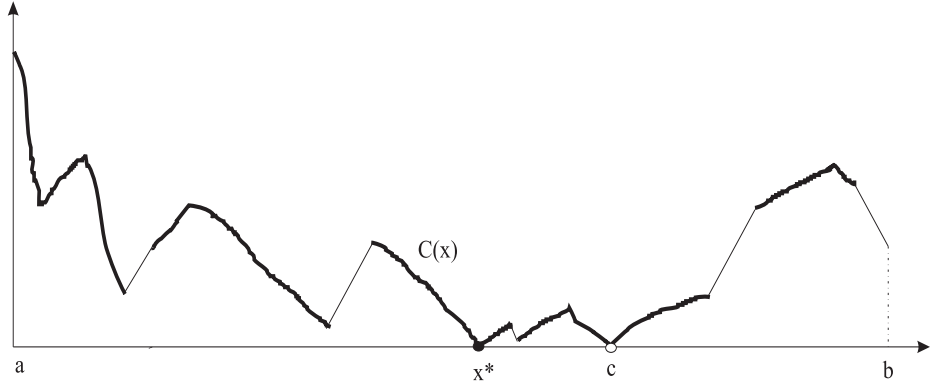, width = 4.8in, height = 2in,  silent = yes }
  \end{center}
\end{figure}

If $Q^{\delta}\neq \emptyset$, the global minimizers of the
original constrained problem (\ref{problem})--(\ref{const})
coincide with the so\-lu\-tions
 of the following  continuous problem
\begin{equation}
 C(x^{*})=\min \{ C(x): x\in \overline{Q}^{\delta} \}.  \label{C7}
  \end{equation}
In the case $Q^{\delta}= \emptyset$ the set
$\overline{Q}^{\delta}= [a,b]\setminus \ Q_{m+1} $ and we have
$\nu (x)<m+1, x \in \overline{Q}^{\delta}$. Thus, due to
Lemma~\ref{l1}, it follows
\begin{equation}
C (x)>0, \hspace{3mm} x \in \overline{Q}^{\delta}. \label{C71}
  \end{equation}

Similarly to definition of the function $J^k(x)$, the value
$Z^{*}_{k}$ is used to define the functions $C^k (x), x \in
[a,b],$ as follows.
\begin{equation}
C^k(x) = \max_{y \in [a,b]} \{ J^k(x),\hspace{1mm} J^k(y)-
K_{\nu(y)}\mid x- y \mid) \}.         \label{Ck}
\end{equation}

\begin{lemma}
 \label{l2}
The following assertions hold for the functions $C (x)$ and $
C^k(x)$:
\begin{enumerate}
\item[i.]
inequalities
 $C (x) \ge J (x),  \hspace{2mm} C^k(x) \ge J^k(x)$ hold over the set $\overline{Q}^{\delta}$;
\item[ii.]
  if \hspace{1mm} $\nu (x)<m+1$ then $C^k(x)>0$;
\item[iii.]
 if \hspace{1mm}$\nu (x)=m+1$ and $ x \in W(k)$ then $C^k(x)\ge 0$;
\item[iv.]
 if \hspace{1mm}$ x \in \{ x: g_{m+1}(x) \le
  Z^{*}_{k} \}$ then $  C^k (x) \le 0$.
\item[v.]
if \hspace{1mm} $x^{*}_{k} \in Q^{\delta}$ then $C^k (x) \le
C(x),\hspace{5mm} x \in  Q^{\delta}$.
\end{enumerate}
\end{lemma}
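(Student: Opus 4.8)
The plan is to treat the five assertions in sequence, since each is a structural consequence of the definitions of $J^k$, $C$, $C^k$ together with Lemma~\ref{l1}. For assertion~i, I would observe that in both $\max$-expressions (\ref{6}) and (\ref{Ck}) the first argument of the maximum is $J(x)$, respectively $J^k(x)$; hence the maximum is trivially bounded below by that argument, giving $C(x)\ge J(x)$ on $\overline{Q}^{\delta}$ and $C^k(x)\ge J^k(x)$ on $[a,b]$ (and in particular on $\overline{Q}^{\delta}$). For assertion~ii, if $\nu(x)<m+1$ then by Lemma~\ref{l1}(i) we have $J^k(x)=J(x)>0$, and combining with assertion~i yields $C^k(x)\ge J^k(x)>0$.

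For assertion~iii, suppose $\nu(x)=m+1$ and $x\in W(k)$. Then by the definition (\ref{Zk}) of $Z^*_k$ as a minimum over exactly such points, $g_{m+1}(x)\ge Z^*_k$, so $J^k(x)=g_{m+1}(x)-Z^*_k\ge 0$; assertion~i then gives $C^k(x)\ge J^k(x)\ge 0$. For assertion~iv, if $g_{m+1}(x)\le Z^*_k$ then Lemma~\ref{l1}(ii) gives $J^k(x)\le 0$; but I also need to control the other term $J^k(y)-K_{\nu(y)}|x-y|$ inside the maximum in (\ref{Ck}). Here is where I must be a little careful: I would argue that for any such $y$ with $\nu(y)=m+1$, $|J^k(y)-J^k(x)|\le K_{\nu(y)}|x-y|$ by the Lipschitz property (\ref{Lip}) of $g_{m+1}$ together with the definition of $K_{\nu(y)}$ as a strict overestimate of $L_{m+1}$; hence $J^k(y)-K_{\nu(y)}|x-y|\le J^k(x)\le 0$. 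If instead $\nu(y)<m+1$, the term $J^k(y)-K_{\nu(y)}|x-y|$ could a priori be positive, so I would need to observe that the region $\{x:g_{m+1}(x)\le Z^*_k\}$ lies in $Q_{m+1}$ (where the objective is defined, hence $\nu=m+1$), and that the Lipschitz-based downward slopes from any $y$ cannot lift $J^k$ above the values it takes on $Q_{m+1}$ — this is exactly the mechanism illustrated by Fig.~\ref{f2} and is the crux of why $C^k$ is a valid support-type minorant. So $C^k(x)=\max$ over all $y$ of quantities each $\le 0$, giving $C^k(x)\le 0$.

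For assertion~v, assume $x^*_k\in Q^{\delta}$. By the remark following Lemma~\ref{l1}, $x^*_k\in Q^{\delta}$ forces $Z^*_k=g_{m+1}(x^*_k)\ge f^*$, and then Lemma~\ref{l1}(iii) gives $J^k(y)\le J(y)$ at every point $y$ with $\nu(y)=m+1$, while Lemma~\ref{l1}(i) gives $J^k(y)=J(y)$ at every point $y$ with $\nu(y)<m+1$; in all cases $J^k(y)\le J(y)$ on $\overline{Q}^{\delta}$. Substituting this termwise into (\ref{Ck}) versus (\ref{6}) — noting that the $\max$ in the definition of $C^k$ runs over $y\in[a,b]\supseteq\overline{Q}^{\delta}$, so I should restrict attention to the relevant points and argue that contributions from $y\notin\overline{Q}^{\delta}$ are dominated — yields $C^k(x)\le C(x)$ for $x\in Q^{\delta}$.

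I expect assertion~iv to be the main obstacle: it is the only one where the maximum over $y$ genuinely has to be estimated rather than simply bounded below by its first argument, and it requires combining the Lipschitz condition (\ref{Lip}), the strict overestimation $L_{\nu(y)}<K_{\nu(y)}$, and the geometric fact that the sublevel set $\{g_{m+1}\le Z^*_k\}$ sits inside $Q_{m+1}$. The remaining assertions are essentially bookkeeping on top of Lemma~\ref{l1}.
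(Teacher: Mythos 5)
Your proposal is correct and follows the same route as the paper, which simply asserts that all five claims follow from Lemma~\ref{l1} and the definitions (\ref{Zk}), (\ref{xk}), (\ref{6}), (\ref{Ck}) without writing out the details. You correctly identify and handle the only two non-trivial points the paper leaves implicit: in assertion~iv, that for $y$ with $\nu(y)=j<m+1$ the cone value at $x\in Q_{m+1}\subseteq Q_{j+1}$ is bounded by $g_j(x)\le 0$ via the Lipschitz condition, and in assertion~v, that the extra points $y\in Q_{m+1}\setminus Q^{\delta}$ in the larger maximization domain of $C^k$ contribute terms dominated by $J^k(x)$.
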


\begin{proof}
The truth of the assertions  follows from Lemma~\ref{l1} and
formulae (\ref{Zk}),(\ref{xk}), (\ref{6}), and (\ref{Ck}).
 \rule{5pt}{5pt}
 \end{proof}

It follows from Lemma~\ref{l2}  that  if $x^{*}_{k} \in
Q^{\delta}$, the global minimizers cannot be located in zones
where $C^k(x)>0,x \in Q^{\delta}$. Over every interval
$[l_{i},r_{i}]$ we are interested in subregions having the index
greater or equal to
\[
\overline{\nu_{i}}= \max \{ \nu(l_{i}),\nu(r_{i}) \},
\]
because, due to construction of the function  $C^k(x)$, only these
subregions can probably contain a global minimizer. It can be
shown  (see \cite{Sergeyev_Famularo_Pugliese}) that
\begin{equation}
[l_{i},r_{i}] \cap Q_{\overline{\nu_{i}}} \subseteq \left\{
\begin{array}{ll}
              $[$y^{-}_{i},y^{+}_{i}$]$, & \hspace{5mm} \nu(l_{i}) = \nu(r_{i}), \\
              $[$y^{-}_{i} , r_{i}$]$,     & \hspace{5mm} \nu(l_{i}) < \nu(r_{i}), \\
              $[$l_{i} , y^{+}_{i}$]$,     & \hspace{5mm} \nu(l_{i}) >
              \nu(r_{i}),
             \end{array}
       \right.               \label{intersection}
\end{equation}
where
 \beq y^{-}_{i}= l_{i}+z(l_{i})/K_{\nu(l_{i})},    \label{y-}
 \eeq
 \beq
  y^{+}_{i}= r_{i}-z(r_{i})/K_{\nu(r_{i})},          \label{y+}
\eeq
 and $z(x)=J^k(x)$.  Let us call any value $ R_i,1\le i\le k,$
{\it characteristic} of the interval $[l_{i},r_{i}]$ if the
following inequality is true
\begin{equation}
\min  \{ C^k (x): x \in [l_{i},r_{i}], \nu(x)=
\overline{\nu_{i}}\} \ge R_i. \label{minRi}
\end{equation}
 It follows from assertion i of Lemma~\ref{l2} and
\cite{Sergeyev_Famularo_Pugliese} that (\ref{minRi}) is fulfilled
for $ R_i = \check{R}_i$ where
\begin{equation}
\check{R}_i=\check{R}(l_{i},r_{i})=\left\{
\begin{array}{ll}
              0.5(z(l_{i})+z(r_{i})-K_{\nu(r_{i})}(r_{i}-l_{i})), &
\hspace{5mm} \nu(l_{i})=\nu(r_{i}),\\
              z(r_{i})-K_{\nu(r_{i})}(r_{i}-y^{-}_{i}), & \hspace{5mm} \nu(l_{i})<\nu(r_{i}), \\
              z(l_{i})-K_{\nu(l_{i})}(y^{+}_{i}-l_{i}), & \hspace{5mm}
              \nu(l_{i})>\nu(r_{i}).
             \end{array}
       \right.                  \label{Ri}
\end{equation}
The characteristic  $ \check{R}_i$ from (\ref{Ri}) depends only on
the values of the function $J^k(x)$ evaluated at the points
$l_{i}$ and $r_{i}$. It does not use any information from other
intervals belonging to the working list $W(k)$.

We are ready now to introduce the Algorithm working with
Continuous Index Functions (ACIF). It either  solves the problem
(\ref{C7}) or determines that the case (\ref{C71}) takes place.
The ACIF works by calculating characteristics $R_i$ initially
using (\ref{Ri}) and then improving them during the search by
constructing the function $C^k(x)$. On the one hand, the method
tries to find a good estimate $Z^{*}_{k}$. On the other hand, it
searches and eliminates from $W(k)$ intervals that cannot contain
$x^*$ using the fact  following from Lemma~\ref{l2} and
(\ref{minRi}) that if $x^{*}_{k} \in Q^{\delta}$, an interval
$[l_{j},r_{j}]$ having a characteristic $R_j>0$ can be eliminated
from consideration. The constraint introducing the parameter
$\delta$ helps to exclude more intervals.

Let us take a generic interval $[l_{t},r_{t}], 1 \le t \le
q(k+1),$ from the working list and calculate its characteristic
$R(l_{t},r_{t})$. We will also show how the function $C^k(x)$
allows us to improve characteristics of   intervals adjacent to
$[l_{t},r_{t}]$.

Initially  characteristic for the interval $[l_{t},r_{t}]$ is
calculated as $ R(l_{t},r_{t})=\check{R}(l_{t},r_{t})$. If
$R(l_{t},r_{t})\le 0$ or $\nu (l_{t})= \nu (r_{t})$, then the
characteristic $R(l_{t},r_{t})$ has been computed. If
$R(l_{t},r_{t})> 0$ and $\nu (l_{t}) < \nu (r_{t})$ go to the
operation  {\it Backward motion.} Otherwise execute the operation
{\it Onward motion.}

{  \it Backward motion.}  Exclude from $W(k+1)$ all the intervals
$i$ such that
 \beq
z(r_{t})-K_{\nu(r_{t})}(r_{t}-l_{i}))>0,\hspace{5mm} 1 \le j+1 \le
i \le t-1,    \label{step2}
 \eeq
where the  interval $j$ violates (\ref{step2}). Calculate the
value
\begin{equation}
R^{-}_j=\left\{
\begin{array}{ll}
              0.5(z(l_{j})+z^{-}(r_{j})-K_{\nu(r_{t})}(r_{j}-l_{j})), &
\hspace{5mm} \nu(l_{j})=\nu(r_{t})\\
              z^{-}(r_{j})-K_{\nu(r_{j})}(r_{j}-l_{j}-z(l_{j})/K_{\nu(l_{j})}), & \hspace{5mm} \nu(l_{j})<\nu(r_{t}) \\
              z(l_{j})-K_{\nu(l_{j})}(r_{j}-l_{j}-z^{-}(r_{j})/K_{\nu(r_{t})}), & \hspace{5mm} \nu(l_{j})>\nu(r_{t})
             \end{array}
       \right.                  \label{step21}
\end{equation}
where
 \beq
 z^{-}(r_{j}) = z(r_{t})-K_{\nu(r_{t})}(r_{t}-r_{j})).  \label{step22}
 \eeq
If $R^{-}_j<R_j$, set in the working list $W(k+1)$
 \[
z(r_{j})=z^{-}(r_{j}),\hspace{5mm}\nu(r_{j})=\nu(r_{t}),\hspace{5mm}
R_j=R^{-}_j,
\]
maintaining in the history list $H(k+1)$ the original values of
$g_{\nu (r_{j})}(r_{j})$ and $\nu(r_{j})$. Calculate the number
$q(k+1)$ of the intervals in $W(k+1)$.

An illustration to the operation  {\it Backward motion} is given
in Fig.~\ref{f3}. Three intervals are presented in Fig.~\ref{f3}:
\[
[l_{i-2},r_{i-2}]=[p,q], \hspace{5mm} [l_{i-1},r_{i-1}]=[q,h],
\hspace{5mm} [l_{i},r_{i}]=[h,d].
\]
Suppose that the function $C^k(x)$ has been evaluated at the
points $p,q,h,$ and $d$ and
 \[
\nu(p)=\nu(q)=\nu(h)<\nu(d).
\]
Characteristic $\check{R}_{i-2}$ of the interval $[p,q]$ is
negative and the bold line shows the zone where the global
minimizer could be probably found. The same situation holds for
the interval $[q,h]$. Since the characteristic $\check{R}_{i}$ of
the interval $[h,d]$ is positive and $\nu(h)<\nu(d)$, the
operation  {\it Backward motion} starts to work. It can be seen
from Fig.~\ref{f3} that the new characteristics $R_{i-2}$ and
$R_{i-1}$ calculated using information obtained at the point $d$
are positive and, therefore, the intervals $[p,q]$ and $[q,h]$
cannot contain   global minimizers and can be so excluded from the
working list.

\begin{figure}[t]
  \begin{center}
    \caption{Improving characteristics by the operation  {\it Backward motion}.  \label{f3}}
    \epsfig{ figure = 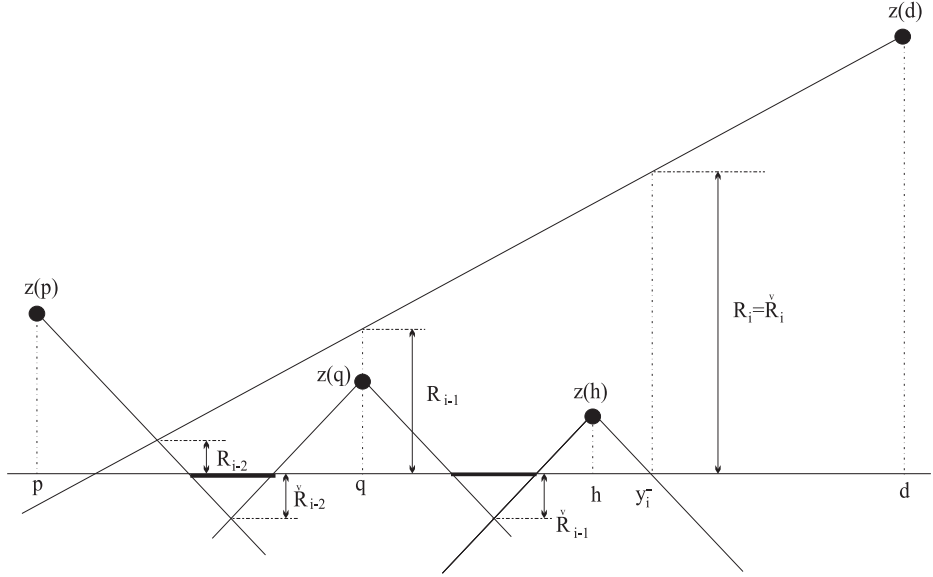, width = 4.8in, height = 3.in,  silent = yes }
  \end{center}
\end{figure}

{  \it Onward motion.} Exclude from $W(k+1)$ all the intervals
$i$ such that
 \beq
z(l_{t})-K_{\nu(l_{t})}(r_{i}-l_{t}))>0,  \hspace{5mm} t+1 \le i
\le j-1 \le q(k),     \label{step3}
 \eeq
where the  interval $j$ violates (\ref{step3}). Calculate the
value
\begin{equation}
R^{+}_j=\left\{
\begin{array}{ll}
              0.5(z^{+}(l_{j})+z(r_{j})-K_{\nu(r_{j})}(r_{j}-l_{j})), &
\hspace{5mm} \nu(l_{t})=\nu(r_{j})\\
              z(r_{j})-K_{\nu(r_{j})}(r_{j}-l_{j}-z^{+}(l_{j})/K_{\nu(l_{t})}), & \hspace{5mm} \nu(l_{t})<\nu(r_{j}) \\
              z^{+}(l_{j})-K_{\nu(l_{t})}(r_{j}-l_{j}-z(r_{j})/K_{\nu(r_{j})}), & \hspace{5mm} \nu(l_{t})>\nu(r_{j})
             \end{array}
       \right.                  \label{step31}
\end{equation}
where
 \beq
 z^{+}(l_{j}) = z(l_{t})-K_{\nu(l_{t})}(l_{j}-l_{t})).  \label{step32}
 \eeq
If $R^{+}_j<R_j$, set in the working list $W(k+1)$
 \[
z(l_{j})=z^{+}(l_{j}),\hspace{5mm}\nu(l_{j})=\nu(l_{t}),\hspace{5mm}
R_j=R^{+}_j,
\]
maintaining in the history list $H(k+1)$ the original values of
$g_{\nu (l_{j})}(l_{j})$ and $\nu(l_{j})$. Calculate the number
$q(k+1)$ of the intervals in $W(k+1)$.

In order to describe the method we need some definitions and
initial settings. It is supposed that:
 \bd
\item
 -- the  search accuracy $0 < \varepsilon \le \delta$  has been
chosen, where $\delta$ is from (\ref{problem});
\item
 -- two initial trials have been executed at the points $x^{0}=a$ and
$x^{1}=b$;
\item
 --   It has been assigned $W(1)=H(1)=[x^{0},x^{1}]$ and the number $t$
of the interval to be subdivided at the next iteration has been
set to $t=1$;
 \item
 -- the values $Z^{*}_{k}$ and
 \beq
 M^k= \max \{  \nu (x^{i}): 0\le i\le k \}          \label{Mk}
 \eeq
have been calculated for $k=1$;
 \item
 -- the set $V^\delta$ containing the points $x \in H(k)$ such
 that $x \in Q_{m+1}\setminus Q^{\delta}$ has been set to $V^\delta=
 \emptyset$.
 \ed

Suppose now that  $k, k\ge 1,$ iterations have been made by the
ACIF, the list $H(k)$ contains $k$ intervals, $W(k)$ contains
$q(k)$ intervals for which  characteristics have been evaluated,
and an interval $[l_{t},r_{t}]$ for subdivision has been found.
The choice of the next interval to be subdivided is made as
follows.

\begin{enumerate}
 \item[]
  {\hspace{5mm} \bf Step 1.  \it (Subdivision and the new
trial.)} Update $W(k+1)$ and $H(k+1)$ by substituting the interval
$[l_{t},r_{t}]$ in $W(k)$ and $H(k)$ by the new intervals
$[l_{t},x^{k+1}]$, $[x^{k+1},r_{t}]$ where
 \beq
x^{k+1}=\left\{ \begin{array}{ll}
 0.5(y^{-}_{t}+y^{+}_{t}), &  \hspace{5mm} \nu(l_{t})=\nu(r_{t})\\
0.5(y^{-}_{t}+r_{t}), &  \hspace{5mm} \nu(l_{t})<\nu(r_{t})\\
0.5(l_{t}+y^{+}_{t}), &  \hspace{5mm} \nu(l_{t})>\nu(r_{t})
             \end{array}
       \right.            \label{step1}
 \eeq
 Execute the $(k+1)$-th trial at the point $x^{k+1}$ and, as
the result, obtain the values $\nu (x^{k+1})$ and $g_{\nu
(x^{k+1})}(x^{k+1})$. Recalculate $M^{k+1}$.
\item[]
{\hspace{5mm} \bf Step 2. \it (Calculation of  the estimate
$Z^{*}_{k+1}$ and characteristics.) } Associate with the point
$x^{k+1}$ the value $z^{k+1}=J^{k+1} (x^{k+1})$ and recalculate
the estimate $Z^{*}_{k+1}$ if  $\nu (x^{k+1})=m+1$. If
$Z^{*}_{k+1}<Z^{*}_{k}$ then for all points $x \in W(k+1), x \neq
x^{k+1},$ such that $\nu(x)=m+1$  set
$z(x)=z(x)+Z^{*}_{k}-Z^{*}_{k+1}$ and recalculate characterisitcs
of the intervals in $W(k+1)$. Otherwise calculate characteristics
only  for the intervals $[l_{t},x^{k+1}]$ and $[x^{k+1},r_{t}]$.
Go to Step 3.
\item[]
{\hspace{5mm} \bf Step 3. \it (Finding an interval for the next
subdivision.)} If $W(k+1)=\emptyset$, then Stop (the feasible
region is empty). Otherwise, find in the working list $W(k+1)$ an
interval $[l_{t},r_{t}]$ such that
\begin{equation}
t = \min \{ \arg \min \{ R_i: 1\le i\le q(k+1) \} \}
\label{step4}
\end{equation}
and go to Step 4.
\item[]
{\hspace{5mm} {\bf Step 4.} \it (Verifying appurtenance to the
set $Q^\delta$.)} If the interval to be subdivided can belong to
the set $Q^\delta$ then go to Step~5. Otherwise exclude all found
intervals that are out of $Q^\delta$ from the working list and
include the points forming these intervals and having the index
$m+1$ in the set $V^\delta$. If the point $x^{*}_{k+1}$ belongs
to one of the excluded intervals   then go to Step~6 otherwise go
to Step~3.
\item[]
 {\hspace{5mm} \bf Step 5. \it (Verifying accuracy.)} If the inequality
 \begin{equation}
r_{t}-l_{t} > \varepsilon             \label{step6}
\end{equation}
holds, then go to Step 1. In the opposite case, Stop (the required
accuracy has been reached).
\item[]
{\hspace{5mm} \bf Step 6. \it (Restarting.)} Recalculate the
estimate $Z^{*}_{k+1}$ without usage of  the points included in
$V^\delta$. Form the new set $W(k+1)$ including in it all the
intervals from $H(k+1)$ that do not contain points from $V^\delta$
and intervals containing points $x \in V^\delta$ such that $z(x) >
Z^{*}_{k+1}$. For all intervals in $W(k+1)$ recalculate
characteristics $R_i$ applying backward motion for all intervals
$i$ having $R_i>0 $ if $\nu (l_{t})< \nu (r_{t})$ and onward
motion if $\nu (l_{t})> \nu (r_{t})$. In the latter case,
characteristics of the intervals satisfying (\ref{step3}) are not
calculated. Exclude from $W(k+1)$  all the intervals having
positive characteristics. Then go to Step~4.
 \end{enumerate}

Step  4 executes an important operation -- verifying appurtenance
to the set $Q^\delta$.  To do this we check whether  the  interval
$[l_{t},r_{t}]$ chosen for subdivision can contain a feasible
interval having a length greater than $\delta$. Four cases should
be considered.
 \begin{enumerate}
 \item[]
 { i. \it (Case $\nu (l_{t})< m+1$, $\nu (r_{t})< m+1$.)}  If
 \begin{equation}
y^{+}_{t}-y^{-}_{t}=r_{t}-l_{t}-z(r_{t})/K_{\nu(r_{t})}-z(l_{t})/K_{\nu(l_{t})}
< \delta  \label{step51}
\end{equation}
then $[l_{t},r_{t}] \notin Q^\delta$ because over $[l_{t},r_{t}]$
only the interval $[y^{-}_{t},y^{+}_{t}]$ can possibly contain a
global minimizer but its length is less than $\delta$.
\item[]
{ ii. \it (Case $\nu (l_{t})= m+1$, $\nu (r_{t})<m+1$.)}
Analogously, if
 \[
r_{t}-l_{t}-z(r_{t})/K_{\nu(r_{t})} > \delta
\]
then the interval $[l_{t},r_{t}]$ can belong to the set
$Q^\delta$. Otherwise, if in the history list $H(k+1)$ there
exists an interval $[l_{j},r_{j}], j<t,$  such that $\nu(l_{j})<
m+1$   and
 \begin{equation}
r_{t}-l_{j}-z(r_{t})/K_{\nu(r_{t})}-z(l_{j})/K_{\nu(l_{j})} <
\delta  \label{step521}
\end{equation}
or $\nu(l_{j})= m+1,j=1,$ and
 \begin{equation}
r_{t}-l_{j}-z(r_{t})/K_{\nu(r_{t})} < \delta  \label{step522}
\end{equation}
then all the intervals $[l_{j},r_{j}],\ldots,[l_{t},r_{t}] \notin
Q^\delta$ and the corresponding points $r_{j},\ldots,l_{t} \in
Q_{m+1}\backslash Q^\delta$.
\item[]
 { iii. \it (Case $\nu (l_{t})< m+1$, $\nu
(r_{t})=m+1$.)} This case is considered analogously to the
previous one but confirmation of  possibility for $[l_{t},r_{t}]$
to belong to $Q^\delta$ is searched among intervals $i>t$.
\item[]
 { iv. \it (Case $\nu (l_{t})=\nu
(r_{t})=m+1$.)} This case is a combination of the cases ii
and~iii.
 \end{enumerate}

The introduced procedure verifies inclusion  $[l_{t},r_{t}] \in
Q^\delta$ for all possible combinations of indexes $\nu
(l_{t}),\nu (r_{t})$. Of course, it is also possible to simplify
Step 4 and  verify only condition (\ref{step51}) -- the rule
determining during the search the major part of intervals
belonging to $Q_{m+1} \setminus Q^\delta$. In this case, after
satisfying  the stopping rule from Step 5, it is necessary to
check whether the found solution $x^{*}_{k+1}$ belongs to $Q_{m+1}
\setminus Q^\delta$ and, if necessary, to reiterate the method
starting from Step 6.

The following situations  can, therefore,  hold after fulfillment
of the stopping rule:
\begin{enumerate}
\item[i.]
The algorithm has finished its work and the working list is
empty, then $ Q^\delta = \emptyset$ and the set $ V^\delta$
contains the points from $Q_{m+1} \backslash Q^\delta $ if any.
\item[ii.]
The working list is not empty and it does not contain intervals
$[l_{p},r_{p}]$ such that $R_p<0$ and
 \beq
 \max \{\nu(l_{p}), \nu(r_{p}) \} < m+1.          \label{maxp}
 \eeq
In this case it is necessary to check locally in the neighborhood
of $x^{*}_{k}$ whether $x^{*}_{k} \in  Q^\delta $. If this
situation holds, then the global minimum $z^{*}$ of the problem
(\ref{problem})--(\ref{const}) can be bounded as follows
\[
z^{*} \in [R_{t(k)}+Z^{*}_{k}, Z^{*}_{k}]
\]
where  $R_{t(k)}$ is the characteristic corresponding to the
interval  number $t=t(k)$ from (\ref{step4}). In the opposite
case it is necessary to include the point $x^{*}_{k}$ in
$V^\delta$ and to return to Step~6.
\item[iii.]
The last case considers the situation where the working list is
not empty  and there exists an interval $[l_{p},r_{p}]$ such that
$R_p < 0$ and (\ref{maxp}) holds. Again, it is necessary to check
locally in the neighborhood of $x^{*}_{k}$ whether $x^{*}_{k} \in
Q^\delta $. If this analysis shows that $x^{*}_{k} \notin Q^\delta
$ then it is necessary to include the point $x^{*}_{k}$ in
$V^\delta$ and return to Step~6. Otherwise, the value $Z^{*}_{k}$
can be taken as an upper bound of the global minimum $z^{*}$. A
lower bound can be calculated easily by taking from the working
list the trial points $x_i$ such that $\nu(x_i)= m+1$ and
constructing for $f(x)$  the support function of the type
\cite{Pijavskii_(1972)} using only these points. The global
minimum of this support function over the intervals belonging to
the working list will be a lower bound for $z^{*}$.
\end{enumerate}

Consider now the infinite trial sequence $\{x^k\}$ generated by
the algorithm ACIF when $\varepsilon=0$ in the stopping rule
(\ref{step6}). We denote by $X^*$ the set of the global
minimizers of the problem (\ref{problem})--(\ref{const}) and by
$X'$ the set of limit points of the sequence $\{x^{k}\}$. The
following two theorems describe convergence conditions of the
ACIF. Since they can be  derived as a particular case of general
convergence studies given in \cite{Horst_and_Tuy_(1996)}
(Branch-and-Bound approach) and \cite{Sergeyev_(1999)} (Divide
the Best algorithms) their proofs are omitted.

\begin{theorem}
If the problem (\ref{problem})--(\ref{const}) is feasible, i.e. $
Q^\delta \neq \emptyset,$ then $X^*=X'$.
\end{theorem}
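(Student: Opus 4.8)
The plan is to verify that the ACIF fits the abstract framework of "Divide the Best" algorithms from \cite{Sergeyev_(1999)} (equivalently, the Branch-and-Bound scheme of \cite{Horst_and_Tuy_(1996)}), so that the general convergence theorem applies and yields $X^{*}=X'$. First I would check the two inclusions separately. For $X'\subseteq X^{*}$: since $\varepsilon=0$, the algorithm never stops, so by Step~1 at every iteration the interval with the minimal characteristic is subdivided and its length is halved (up to the $y^{\pm}$ corrections), hence the diameters of the selected intervals tend to zero; combining this with the fact that $\check R_i$ (and every improved characteristic $R_i$) is a valid lower bound for $\min\{C^k(x):x\in[l_i,r_i],\nu(x)=\overline{\nu_i}\}$ by \eqref{minRi}, and that intervals with $R_i>0$ are discarded only when $x^{*}_{k}\in Q^{\delta}$, one gets that any limit point $x'$ of $\{x^k\}$ satisfies $C^{k}(x')\le 0$ in the limit, and then by Lemma~\ref{l2} (assertions ii--v) and the relation $Z^{*}_{k}\to f^{*}$ this forces $\nu(x')=m+1$, $x'\in Q^{\delta}$ and $f(x')=f^{*}$, i.e. $x'\in X^{*}$.

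For the reverse inclusion $X^{*}\subseteq X'$, the key point is that the selection rule \eqref{step4} is "greedy with respect to a consistent bounding function": because each characteristic is a genuine lower bound on $C^k$ over the feasible part of the interval, an interval containing a global minimizer $x^{*}\in X^{*}$ always has characteristic $R_i\le C^{k}(x^{*})\le 0$ (using Lemma~\ref{l2}(v), valid once $x^{*}_k\in Q^{\delta}$, and $C(x^{*})=\min C$ from \eqref{C7}), so it is never eliminated by the backward/onward motions and is eventually selected infinitely often; the standard Divide-the-Best argument then places a subsequence of trials converging to $x^{*}$. Here I would also have to track the restarting mechanism: whenever $x^{*}_{k}$ turns out to lie in $Q_{m+1}\setminus Q^{\delta}$, Step~6 recomputes $Z^{*}_{k}$ discarding the points in $V^{\delta}$, and since $Q^{\delta}\neq\emptyset$ only finitely many such restarts can occur before $Z^{*}_{k}$ stabilizes as an overestimate of $f^{*}$ coming from genuinely feasible trials, after which the preceding analysis applies unchanged.

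The main obstacle I expect is precisely the bookkeeping around the characteristics and the restarts: one must show that the improved characteristics produced by {\it Backward motion} and {\it Onward motion} (formulas \eqref{step21}--\eqref{step22} and \eqref{step31}--\eqref{step32}) remain valid lower bounds for $C^{k}$ — i.e. that replacing $z(r_j)$ by $z^{-}(r_j)$ and $\nu(r_j)$ by $\nu(r_t)$ in the working list never destroys property \eqref{minRi} — and that the deletion tests \eqref{step2}, \eqref{step3} are sound, so that no interval meeting a global minimizer is ever removed. Once this monotonicity/soundness is in place, together with the diameter-to-zero property of the selected intervals and the convergence $Z^{*}_{k}\to f^{*}$, the hypotheses of the convergence theorems of \cite{Horst_and_Tuy_(1996),Sergeyev_(1999)} are met and the equality $X^{*}=X'$ follows; this is exactly why the authors can legitimately state that the proof "can be derived as a particular case" and omit the details.
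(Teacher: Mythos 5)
Your proposal follows exactly the route the paper itself takes: the paper gives no proof of this theorem, stating only that it can be derived as a particular case of the general convergence results for the Branch-and-Bound \cite{Horst_and_Tuy_(1996)} and Divide-the-Best \cite{Sergeyev_(1999)} schemes, and this reduction (verifying that the ACIF's characteristics are sound lower bounds, that the deletion and restart rules never discard a global minimizer, and that $Z^{*}_{k}\to f^{*}$) is precisely what you outline. The one imprecision is your claim that the selected interval's length is ``halved'': the trial point (\ref{step1}) need not be near the midpoint of $[l_{t},r_{t}]$, so the diameter-to-zero property has to be argued for the characteristic subintervals $[y^{-}_{t},y^{+}_{t}]$ (respectively $[y^{-}_{t},r_{t}]$, $[l_{t},y^{+}_{t}]$) rather than for $[l_{t},r_{t}]$ itself, which is exactly the technicality the cited general theory is set up to handle.
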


\begin{theorem} If the  problem (\ref{problem})--(\ref{const})
is infeasible then the algorithm ACIF stops in a finite number of
iterations.
\end{theorem}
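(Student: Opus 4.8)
The plan is to show that when $Q^\delta=\emptyset$ the ACIF must halt at Step~3 with $W(k+1)=\emptyset$ after finitely many iterations. The key observation, established in the excerpt, is relation~(\ref{C71}): when $Q^\delta=\emptyset$ the complement set reduces to $\overline{Q}^{\delta}=[a,b]\setminus Q_{m+1}$, every point there has index $\nu(x)<m+1$, and hence $C(x)>0$ on all of $\overline{Q}^{\delta}$. Since this set is compact (it is closed, being the complement in $[a,b]$ of the open set on which some $g_j>0$ fails to be defined — more precisely, it equals $[a,b]$ minus the union of the feasible intervals and isolated points of $Q_{m+1}$, and under the infeasibility hypothesis this is all of $[a,b]$ minus at most isolated feasible points, so in fact $C(x)>0$ everywhere it is defined) and $C$ is the upper envelope of finitely many Lipschitz saw-tooth pieces, there is a uniform bound $C(x)\ge \gamma>0$. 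First I would record this uniform positivity.

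Next I would translate the positivity of the exact index function $C$ into eventual positivity of the computed characteristics $R_i$. By assertion~i of Lemma~\ref{l1}, on points of index below $m+1$ we have $J^k(x)=J(x)$, so $C^k$ agrees with $C$ on the relevant subregions; and by~(\ref{minRi})--(\ref{Ri}) the characteristic $\check R_i$ is a guaranteed lower bound for $\min\{C^k(x):x\in[l_i,r_i],\ \nu(x)=\overline{\nu_i}\}$. Because $K_{\nu}>L_{\nu}$ strictly, the saw-tooth minorant built on the two endpoints of an interval converges to $C^k$ uniformly as the interval length shrinks; concretely, for an interval of length $h$ one gets $\check R_i \ge \gamma - \tfrac12(\max_j K_j)\,h\cdot$(const), so once the partition is fine enough every interval has $\check R_i>0$. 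The second step, then, is: there exists $\bar h>0$ such that any interval $[l_i,r_i]$ with $r_i-l_i\le\bar h$ has characteristic $R_i\ge\check R_i>0$.

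Then I would invoke the elimination mechanism. Step~3 selects the interval with minimal characteristic, and Step~4/Step~6 together with the remark following Lemma~\ref{l2} guarantee that any interval with positive characteristic is removed from $W$; moreover in the infeasible case there is no $x^{*}_k$ to trigger the restart branch (there are no points of index $m+1$ in the feasible-of-length-$\delta$ sense, so $Z^*_k$-driven corrections do not reintroduce intervals). Hence the algorithm keeps subdividing, every subdivision in~(\ref{step1}) at least halves the selected interval, so after finitely many iterations every surviving interval has length $\le\bar h$, whence every surviving interval has a positive characteristic and is discarded. Therefore $W(k+1)=\emptyset$ at some finite $k$, and the method stops at Step~3. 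The main obstacle I anticipate is the bookkeeping around the backward/onward motion and the restart at Step~6: one must verify that these operations can only \emph{decrease} the count of intervals or the sizes of surviving intervals (never reinstate a previously eliminated interval in the infeasible case), so that the ``eventually all intervals are short'' argument is not undermined. This is exactly the kind of monotonicity argument carried out in the general Branch-and-Bound/Divide-the-Best frameworks of \cite{Horst_and_Tuy_(1996)} and \cite{Sergeyev_(1999)}, which is presumably why the authors defer to those references.
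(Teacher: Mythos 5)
First, note that the paper itself does not prove this theorem: it explicitly defers to the general convergence frameworks of \cite{Horst_and_Tuy_(1996)} and \cite{Sergeyev_(1999)}, so your attempt has to be judged on its own merits rather than against a printed argument.

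The central step of your sketch --- a uniform bound $C(x)\ge\gamma>0$ on $\overline{Q}^{\delta}$ --- is false in exactly the case that makes this theorem nontrivial. Infeasibility means $Q^{\delta}=\emptyset$, \emph{not} $Q_{m+1}=\emptyset$: the set $Q_{m+1}$ may still contain isolated points and feasible intervals of length less than $\delta$. Then $\overline{Q}^{\delta}=[a,b]\setminus Q_{m+1}$ is open in $[a,b]$, hence not compact, and as $x$ approaches a point $c\in Q_{m+1}$ one has $J(x)=g_{\nu(x)}(x)\to 0$; moreover, since $K_{\nu}>L_{\nu}$, within a region of constant index the cone terms $J(y)-K_{\nu(y)}|x-y|$ never exceed $J(x)$, so $C(x)\to 0$ there as well (take $g_1(x)=(x-c)^2$, $Q_2=\{c\}$, as a counterexample to $\gamma>0$). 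Consequently your step ``once the partition is fine enough every interval has $\check R_i>0$'' fails for the intervals straddling $Q_{m+1}\setminus Q^{\delta}$: no amount of refinement makes their characteristics positive. Those intervals are removed not by the characteristic test but by the $\delta$-test of Step~4 (conditions (\ref{step51})--(\ref{step522})), and a correct proof must show that this test fires after finitely many subdivisions around each such component. Your argument as written only covers the case $Q_{m+1}=\emptyset$.

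Two further points. You assert that in the infeasible case the restart branch is never triggered because ``there are no points of index $m+1$''; this is wrong --- trial points can land in $Q_{m+1}\setminus Q^{\delta}$, acquire index $m+1$, define $Z^{*}_{k}$, and Steps~4 and~6 together with the set $V^{\delta}$ exist precisely to undo this. A finite-termination proof must bound the number of such restarts (e.g., by showing each restart permanently adds a point to $V^{\delta}$ and permanently discards a $\delta$-neighbourhood), which you do not address. Finally, the claim that the subdivision (\ref{step1}) ``at least halves the selected interval'' is not true: $x^{k+1}$ is the midpoint of $[y^{-}_{t},y^{+}_{t}]$, not of $[l_{t},r_{t}]$, and can be arbitrarily close to an endpoint; the standard replacement is to show the selected intervals' lengths tend to zero along any subsequence, which is where the cited Divide-the-Best machinery actually enters.
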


\section{Numerical comparison and conclusion}

The ACIF has been numerically compared to the algorithm (indicated
hereinafter as {\it PEN}) proposed by Pijavskii (see
\cite{Pijavskii_(1972),Hansen_(3)}) combined with a penalty
function. The PEN has been chosen for comparison because the
method of Pijavskii in literature (see
\cite{Floudas_and_Pardalos_(1996),Horst_and_Pardalos_(1995),Horst_and_Tuy_(1996),Mladineo_(1992),Pardalos_and_Rosen_(1990),Pinter_(1996),Strongin_(1978)})
is used as a kind of the unit of measure of efficiency of the new
Lipschitz global optimization algorithms and it uses in its work
the same information about the problem as the ACIF -- the
Lipschitz constants for the objective function and constraints.
The usage of the penalty scheme allows us to emphasize advantages
of the index approach.

Since the PEN in every iteration evaluates the objective function
$f(x)$ and {\it all} the constraints, twenty feasible test
problems (ten differentiable and ten non-differentiable)
introduced in \cite{Famularo_Sergeyev_Pugliese} have been used for
testing the new algorithm. The ACIF has also been applied to one
differentiable and one non-differentiable infeasible test problems
from \cite{Famularo_Sergeyev_Pugliese}.   In all the experiments
there has been chosen the original  (see
\cite{Famularo_Sergeyev_Pugliese}) order the constraints are
evaluated during optimization, without determining the best for
the ACIF order.

In the PEN, the constrained problems were reduced to the
unconstrained ones as follows
 \beq P^*(x)=f(x) + P
\max\left\{g_1(x),g_2(x),\dots,g_{N_v}(x),0\right\}  \label{pen}
\eeq
and  coefficients $P$ from \cite{Famularo_Sergeyev_Pugliese} have
been used. The same accuracy $ \varepsilon = 10^{-4} \left(b -
a\right)$ (where $b$ and $a$ are from (\ref{problem})) and the
starting trial points $a$ and $b$ have been used in all the
experiments for both ACIF and PEN.

Table~\ref{t1} contains numerical results obtained for the PEN.
The column ``Evaluation" shows the total number of evaluations
equal to
\[  (N_v +1)\times N_{iter}, \]
where $N_v$ is the number of constraints and $N_{iter}$ is the
number of iterations for each problem.

\begin{center}
\begin{table}[t]\centering
\caption{Numerical results  obtained by the PEN
 on $10$ non-differentiable and $10$ differentiable problems.\label{t1}}
{\small
    \begin{tabular}{c|rr|rr} \hline\noalign{\smallskip}
 Problem    & \multicolumn{2}{c|}{Non-differentiable}  & \multicolumn{2}{c}{Differentiable}\\
 & Iterations & Evaluations& Iterations & Evaluations \\
\noalign{\smallskip}\hline\noalign{\smallskip}
 1 &    $247$ & $494$ &   $83$ & $166$ \\
 2 &    $241$ & $482$&   $954$ & $1906$   \\
 3 &    $797$ & $1594$&   $119$ & $238$   \\
 4 &    $272$ & $819$&    $1762$ & $5286$  \\
 5 &    $671$ & $2013$&   $765$ & $2295$  \\
 6 &    $909$ & $2727$&    $477$ & $1431$ \\
 7 &    $199$ & $597$ &   $917$ & $2751$  \\
 8 &    $365$ & $1460$&   $821$ & $3284$ \\
 9 &    $1183$ & $4732$&  $262$ & $1048$  \\
10 &    $135$ & $540$&   $2019$ & $8076$  \\
 Average & $501.9$ & $1545.8$ & $817.9$ & $2648.1$  \\ \noalign{\smallskip}\hline
    \end{tabular}
    }
\end{table}
    \end{center}

\begin{center}
\begin{table}[t]\centering
\caption{Results obtained by the new algorithm
 on the non-differentiable problems.\label{t2}}
 {\small
    \begin{tabular}{c|rrrrrr|rrrrrr} \hline\noalign{\smallskip}
 Problem    & \multicolumn{6}{c|}{$\delta=\varepsilon$} & \multicolumn{6}{c}{$\delta=10\varepsilon$}\\

 &  $N_{g_1}$ & $N_{g_2}$ & $N_{g_3}$ & $N_{f}$ & Iter. & Eval.&  $N_{g_1}$ & $N_{g_2}$ & $N_{g_3}$ & $N_{f}$ & Iter. & Eval. \\
\noalign{\smallskip}\hline\noalign{\smallskip}
 1 &  $23$ & $-$ & $-$ & $28$ &   $51$ & $79$   & $23$ & $-$ & $-$ & $28$ &   $51$ & $79$\\
 2 &  $18$  & $-$ & $-$ & $16$ & $34$ & $50$    &  $17$  & $-$ & $-$ & $16$ & $33$ & $49$ \\
 3 &  $95$ & $-$ & $-$ & $18$ & $113$ & $131$   &  $80$ & $-$ & $-$ & $18$ & $98$ & $116$\\
 4 &  $107$ & $14$ & $-$ & $84$ & $205$ & $387$ &  $82$ & $11$ & $-$ & $84$ & $177$ & $356$  \\
 5 &  $153$ & $88$ & $-$ & $24$ & $265$ & $401$ &  $114$ & $66$ & $-$ & $24$ & $204$ & $318$  \\
 6 &  $16$ & $16$ & $-$ & $597$ & $629$ & $1839$ &  $16$ & $15$ & $-$ & $597$ & $628$ & $1837$ \\
 7 &  $52$ & $18$ & $-$ & $39$ &  $109$ & $205$  &  $49$ & $14$ & $-$ & $39$ &  $102$ & $194$\\
 8 &  $28$ & $11$ & $3$ & $21$ &  $63$ & $143$   &  $28$ & $11$ & $3$ & $21$ &  $63$ & $143$ \\
 9 &  $8$ & $81$ & $49$ & $183$ & $321$ & $1049$ &  $8$ & $59$ & $32$ & $183$ & $282$ & $954$ \\
10 &  $32$ & $3$ & $17$ & $13$ &  $65$ & $141$   &  $30$ & $2$ & $17$ & $13$ &  $62$ & $137$ \\
 Average &  $53.2$ & $33.0$ & $23.0$ & $102.3$ & $185.5$ &
$442.5$ &  $44.7$ & $25.4$ & $17.3$ & $102.3$ & $170.0$ & $418.3$\\
\noalign{\smallskip}\hline
    \end{tabular}
    }
\end{table}
    \end{center}

\begin{center}
\begin{table}\centering
\caption{Results obtained by the new algorithm
 on the differentiable problems.\label{t3}}
 {\small
    \begin{tabular}{c|rrrrrr|rrrrrr} \hline\noalign{\smallskip}
 Problem    & \multicolumn{6}{c|}{$\delta=\varepsilon$} & \multicolumn{6}{c}{$\delta=10\varepsilon$}\\

 &  $N_{g_1}$ & $N_{g_2}$ & $N_{g_3}$ & $N_{f}$ & Iter. & Eval.&  $N_{g_1}$ & $N_{g_2}$ & $N_{g_3}$ & $N_{f}$ & Iter. & Eval. \\
\noalign{\smallskip}\hline\noalign{\smallskip}
 1 &  $10$ & $-$ & $-$ & $13$ &   $23$ & $36$    &  $10$ & $-$ & $-$ & $13$ &   $23$ & $36$\\
 2 &  $199$  & $-$ & $-$ & $21$ & $220$ & $241$  &  $155$  & $-$ & $-$ & $21$ & $176$ & $197$ \\
 3 &  $40$ & $-$ & $-$ & $22$ & $62$ & $84$      &  $38$ & $-$ & $-$ & $22$ & $60$ & $82$\\
 4 &  $480$ & $127$ & $-$&$189$ & $796$ & $1301$ &  $212$ & $73$ & $-$ & $189$ & $474$ & $925$  \\
 5 &  $8$ & $13$ & $-$ & $122$ & $143$ & $400$   &  $8$ & $13$ & $-$ & $122$ & $143$ & $400$   \\
 6 &  $14$ & $55$ & $-$ & $18$ & $87$ & $178$    &  $13$ & $34$ & $-$ & $18$ & $65$ & $135$ \\
 7 &  $36$ & $13$ & $-$ & $241$ &  $290$ & $785$ &  $35$ & $13$ & $-$ & $241$ &  $289$ & $784$\\
 8 &  $94$ & $21$ & $5$ & $82$ &  $202$ & $479$  &  $80$ & $19$ & $5$ & $82$ &  $186$ & $461$ \\
 9 &  $7$ & $35$ & $6$ & $51$ & $99$ & $299$     &  $7$ & $32$ & $6$ & $51$ & $96$ & $293$ \\
10 &  $36$ & $14$ & $174$ & $1173$ &  $1397$ & $5278$  &  $35$ & $10$ & $92$ & $1173$ &  $1310$ & $5023$ \\
 Average &  $92.4$ & $39.7$ & $61.7$ & $193.2$ & $331.9$ &
$908.1$ &  $59.3$ & $27.7$ & $34.3$ & $193.2$ & $282.2$ & $833.6$\\
\noalign{\smallskip}\hline
    \end{tabular}
    }
\end{table}
    \end{center}

\vspace*{-25mm} Tables~\ref{t2} and~\ref{t3} present  numerical
results for the new method for $\delta=\varepsilon$ and
$\delta=10\varepsilon$. The columns in the Tables have the
following meaning:
\begin{enumerate}
\item [-] the columns $N_{g_1}$, $N_{g_2}$, and $N_{g_3}$   present
 the number of trials where the constraint $g_i, 1 \le i \le 3,$ was the last
 evaluated constraint;
\item [-] the column $N_{f}$ shows how many times the objective function $f(x)$
has been evaluated;
\item [-] the column "Eval." is the total number of evaluations
 of the objective function and the constraints. This quantity is equal
to:
\begin{enumerate}
\item [-]  $N_{g_1} + 2 \times N_{f} ,$\,for problems with one
constraint;
\item [-] $ N_{g_1} + 2 \times N_{g_2} + 3 \times N_{f},$\,  for problems with two
constraints;
\item [-] $ N_{g_1} + 2 \times N_{g_2} + 3 \times N_{g_3} + 4 \times N_{f},$\, for  problems with three
constraints.
\end{enumerate}
\end{enumerate}

It can be seen from the Tables that in all the experiments the
ACIF significantly outperforms the PEN both in iterations and
evaluations. The ACIF works faster if the difference between
$\delta$ and $ \varepsilon$ increases. This effect is especially
notable for problems where it is necessary to execute many
iterations out of the feasible region (see columns $N_{g_1}$,
$N_{g_2}$,  $N_{g_3}$ for non-differentiable problems 3--5, 9 and
differentiable problems 2, 4, 8, 10).

Note that the  penalty approach  requires an accurate tuning of
the penalty coefficient in contrast to the ACIF that works
without necessity to determine any additional parameter. Moreover,
when the penalty approach is used and a constraint $g(x)$ is
defined only over a subregion $[c,d]$ of the search region
$[a,b]$, the problem of extending $g(x)$ to the whole region
$[a,b]$ arises. The ACIF does not have this difficulty because
the constraints and the objective function are evaluated only
within their regions of definition.

Finally, the penalty approach is not able to determine whether a
problem is infeasible. The ACIF with $\delta=\varepsilon$ has
determined infeasibility of the non-differentiable problem   from
\cite{Famularo_Sergeyev_Pugliese} in $86$ iterations consisting of
$81$ evaluations of the first constraint and $5$ evaluations of
the first and second constraints (i.e., $91$ evaluations in
total). The infeasibility of the differentiable problem  from
\cite{Famularo_Sergeyev_Pugliese} has been determined by the ACIF
with $\delta=\varepsilon$ in $38$ iterations consisting of $9$
evaluations of the first constraint and $29$ evaluations of the
first and second constraints (i.e., $67$ evaluations in total).
Naturally, the objective functions have  not been evaluated in
both cases.

In conclusion, we illustrate performance of the new method (see
Fig.~\ref{f4}) and the PEN (see Fig.~\ref{f5}) on the
non-differentiable problem $9$ from
\cite{Famularo_Sergeyev_Pugliese}.
\[
\ds \min_{x \in \left[0 , 4 \right]}   f(x)  =   \ds 3 - 2
\exp\left(-\frac{1}{2} \left(\frac{22}{5}-x\right)\right)
\left|\sin\left( \pi \left(\frac{22}{5}-x\right) \right)\right|
\]
subject to
\[
\begin{array}{cccl}
 & g_1(x) & = & \ds 3 \left(
\exp\left(-\left|\sin\left(\frac{5}{2} \sin\left(\frac{11}{5} x
\right)\right)\right|\right)+\frac{1}{100} x^2 -\frac{1}{2}
\right) \le 0, \\[12pt] & g_2(x) & = & \ds \left\{
\begin{array}{cl} \ds 6 \left(x-\frac{1}{2}\right)^2-\frac{1}{2} &
x \le \frac{1}{2} \\[12pt]
                                          \ds \frac{1}{4} \left( x - \frac{5}{2} \right) & x > \frac{1}{2}
                    \end{array} \right. \le 0, \\[30pt]
& g_3(x) & = & \ds \frac{4}{5} -\left( \left| \sin \left(
\frac{24}{5} - x \right) \right| + \frac{6}{25} - \frac{x}{20}
\right) \le 0.
\end{array}
\]
The problem has $3$ disjoint feasible subregions shown in
Fig.~\ref{f4} by continuous bold intervals on the line $f(x)=0$,
the global optimum is located at the point $x^*=0.95019236$ (see
Fig.~\ref{f4}). The objective function is shown by a solid line
and the constraints are  drawn by dotted/mix-dotted lines.

\begin{figure}[t]
  \begin{center}
    \caption{Behaviour of the new method  on the non-differentiable problem 9 from
\cite{Famularo_Sergeyev_Pugliese}.  \label{f4}}
    \epsfig{ figure = 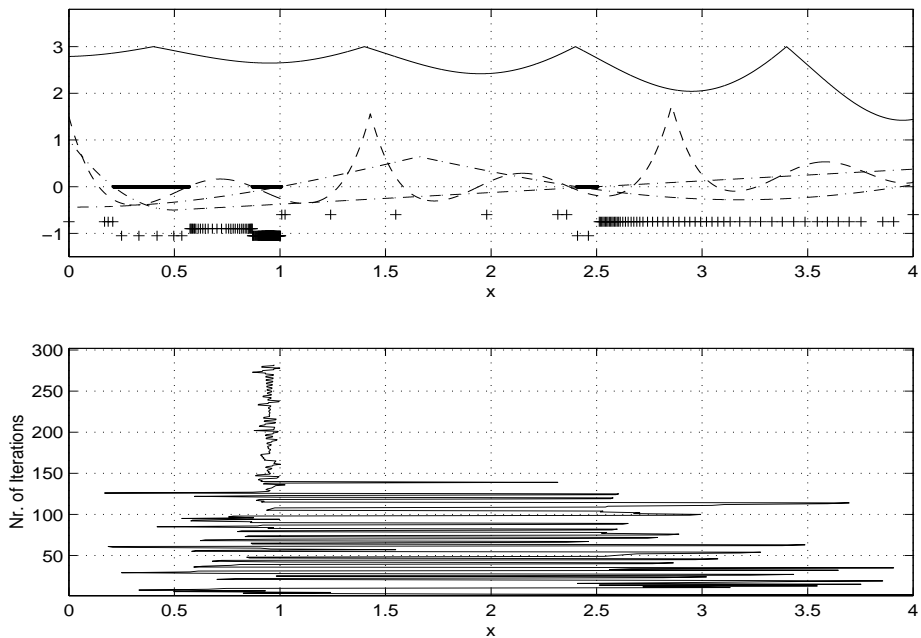, width = 4.8in, height = 3.375in,  silent = yes }
  \end{center}
\end{figure}

\begin{figure}[t]
  \begin{center}
    \caption{Behaviour of the PEN  on the non-differentiable problem $9$ from
\cite{Famularo_Sergeyev_Pugliese}.  \label{f5}}
    \epsfig{ figure = 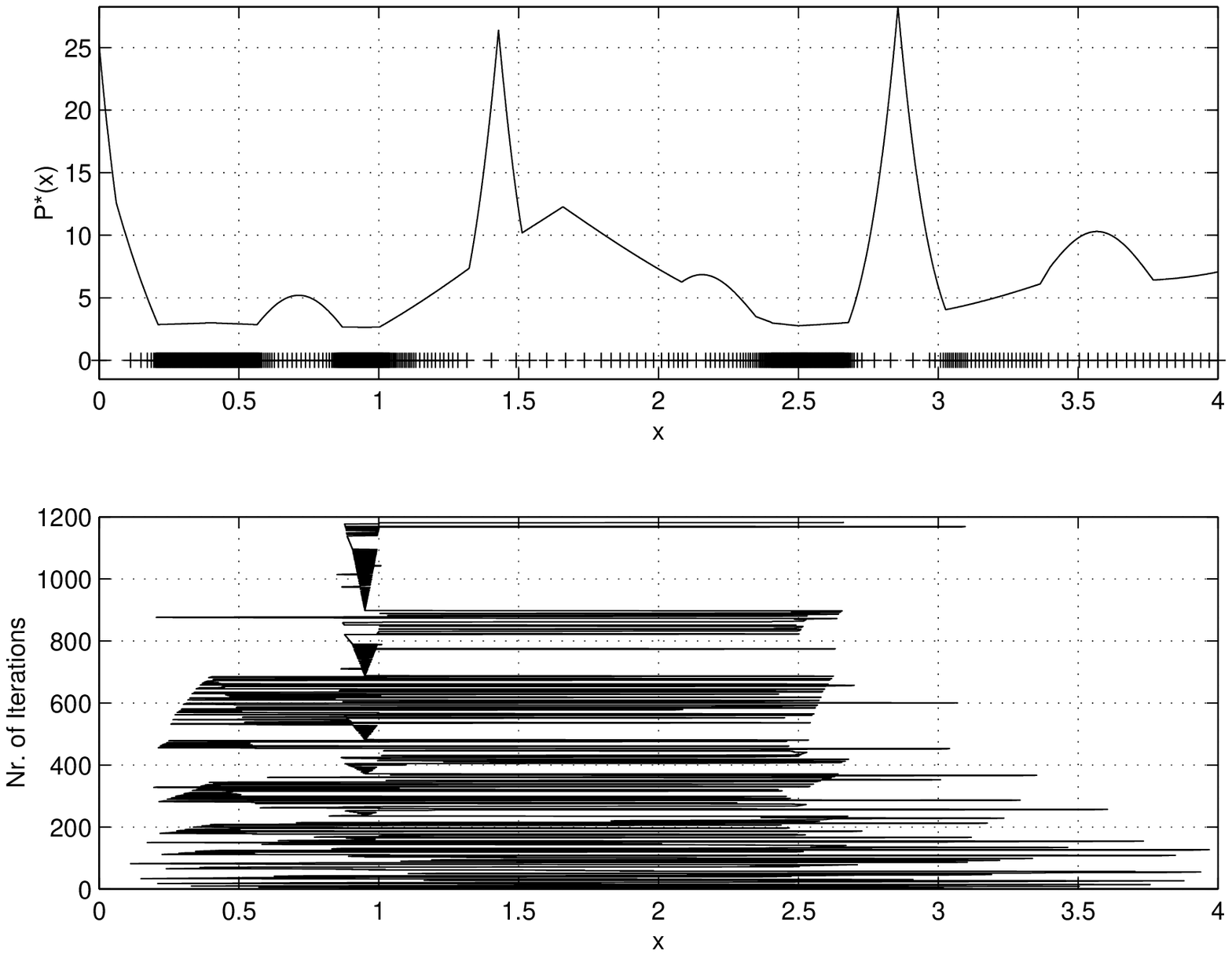, width = 4.8in, height = 3.375in,  silent = yes }
  \end{center}
\end{figure}

The first line (from up to down) of ``+'' located under the graph
of the problem 9 in the upper subplot of Fig.~\ref{f4} represents
the points where the first constraint has not been satisfied
(number of iterations equal to 8). Thus, due to the decision
rules of the ACIF, the second constraint has not been evaluated at
these points. The second line of ``+'' represents the points
where the first constraint has been satisfied but the second
constraint has been not (number of iterations equal to 59). In
these points both constraints have been evaluated but the
objective function has been not. The third line of ``+''
represents the points where both the first and the second
constraints have been satisfied but the third constraint has been
not (number of iterations equal to 32). The last line represents
the points where all the constraints have been satisfied and,
therefore, the objective function has been evaluated (number of
evaluations equal to 183). The total number of evaluations is
equal to $8+59 \times 2+32 \times 3 +183 \times 4 = 954$. These
evaluations have been executed during $8+59+32+183=282$
iterations. The lower subplot in Fig.~\ref{f4} shows dynamics of
the search.


 Fig.~\ref{f5} shows the penalty function
corresponding to $P=15$ and dynamics of the search executed by
the PEN. The line of ``+'' located under the graph in the upper
subplot of Fig.~\ref{f5} represents the points where the function
(\ref{pen}) has been evaluated. The number of iterations is equal
to $1183$ and the number of evaluations is equal to $1183 \times
4 = 4732$.

{}

\end{document}